\documentclass[12pt,reqno]{article}

\usepackage[usenames]{color}
\usepackage{amssymb}
\usepackage{amsmath}
\usepackage{amsthm}
\usepackage{amsfonts}
\usepackage{amscd}
\usepackage{graphicx}
\usepackage{mathrsfs}

\usepackage[colorlinks=true,
linkcolor=webgreen,
filecolor=webbrown,
citecolor=webgreen]{hyperref}

\definecolor{webgreen}{rgb}{0,.5,0}
\definecolor{webbrown}{rgb}{.6,0,0}

\usepackage{color}
\usepackage{fullpage}
\usepackage{float}

\def\modd#1 #2{#1\ \mbox{\rm (mod}\ #2\mbox{\rm )}}

\begin{document}
\theoremstyle{plain}
\newtheorem{theorem}{Theorem}
\newtheorem{corollary}[theorem]{Corollary}
\newtheorem{lemma}[theorem]{Lemma}
\newtheorem{proposition}[theorem]{Proposition}

\theoremstyle{definition}
\newtheorem{definition}[theorem]{Definition}
\newtheorem{example}[theorem]{Example}
\newtheorem{conjecture}[theorem]{Conjecture}

\theoremstyle{remark}
\newtheorem{remark}[theorem]{Remark}
\newcommand{\h}{\hspace{2mm}}

\begin{center}
\vskip 1cm{\LARGE\bf 
Exact Frequencies of Consecutive\\ \vskip .1in
 Quadratic Residue and Nonresidue Patterns Modulo a Prime
  
}
\vskip 1cm
\large
Brahim Mittou \\
Laboratory of Applied Mathematics\\
University of Ouargla, Algeria \\
\href{mailto:mathmittou@gmail.com}{mathmittou@gmail.com}
\end{center}

\vskip .2 in

\begin{abstract}
This paper determines the exact frequency of all consecutive sign patterns of lengths two and three formed by the quadratic character modulo an odd prime $p$. Using basic properties of character sums over finite fields, we derive exact counting formulas for pairs $(n, n+1)$ and triples $(n-1, n, n+1)$ exhibiting specific sequence patterns in \(\{\pm 1\}\). The frequencies of pairs are classified entirely by \(p \pmod 4\), whereas triples exhibit a more complex dependency on \(p \pmod 8\) and the Jacobsthal sum $T_p=\sum_{x\in\mathbb F_p}\chi(x^3-x)$.
\end{abstract}

\textbf{2010 Mathematics Subject Classification:} 11L40, 11A15.

\textbf{Key words and phrases:} quadratic character, character sum, quadratic residue, finite field,
Jacobsthal sum.

\section{Introduction} \label{s1}

Throughout this paper, for a given odd prime $p$, we let $\mathbb F_p$ denote the finite field $\mathbb{Z}/p\mathbb{Z}$ and we let $\chi$ denote the quadratic character modulo $p$.

Since the pioneering work of Gauss, quadratic residues and nonresidues modulo a prime have occupied a prominent place in number theory. One of the fundamental questions in this area concerns the local behavior of the sequence
$
\{\chi(n)\}_{n\in\mathbb F_p}
$.
A natural way to study this sequence is to investigate the occurrence of prescribed sign patterns. In particular, one may ask how frequently pairs or longer blocks of consecutive residues and nonresidues occur.

The study of such patterns dates back several decades. One of the earliest quantitative results is due to Carlitz \cite{2}, who obtained estimates for the number of pairs of consecutive quadratic residues and consecutive quadratic nonresidues in the finite field $\mathbb{F}_p$. A fundamental contribution of Burgess \cite{1} showed that, for sufficiently large primes $p$, the length of the longest run of consecutive quadratic residues or consecutive quadratic nonresidues modulo $p$ is bounded by $O(p^{1/4+\delta})$, where $\delta>0$ is arbitrary. Subsequent refinements and extensions of Burgess's method have led to stronger results; see, for example, \cite{5,7}. Using elementary techniques, Hummel \cite{3} proved the remarkable result that $13$ is the only prime for which the maximal length of a block of consecutive quadratic nonresidues exceeds $\sqrt{p}$.

More intricate residue patterns have also attracted considerable attention. Wang and Lv \cite{8} investigated the number of integers $1\le n\le p-1$ for which $n$, $n+\overline{n}$, and $n-\overline{n}$ are simultaneously quadratic residues or simultaneously quadratic nonresidues modulo $p$, where $\overline{n}$ denotes the multiplicative inverse of $n$ modulo $p$. More recently, Wang and Li \cite{9} examined the distribution of triples of consecutive quadratic residues and consecutive quadratic nonresidues modulo $p$, obtaining exact formulas for their frequencies.

While many existing results concern asymptotic estimates or the existence of long runs of residues and nonresidues, explicit formulas for the exact frequency of short sign patterns are less common. The purpose of this paper is to obtain complete and explicit formulas for all sign patterns of lengths two and three. For $(\varepsilon_1,\varepsilon_2)\in\{\pm1\}^2$,
we consider the quantity
$$
N(\varepsilon_1,\varepsilon_2)
=
\#\{n\in\mathbb F_p:
\chi(n)=\varepsilon_1,
\chi(n+1)=\varepsilon_2\},
$$
which counts the occurrences of a prescribed pair of consecutive quadratic character values. Our first theorem gives an exact evaluation of $N(\varepsilon_1,\varepsilon_2)$ and shows that the distribution depends only on the value of $\chi(-1)$, or equivalently on the residue class of $p$ modulo $4$.

We then study patterns of length three. For $(\varepsilon_1,\varepsilon_2,\varepsilon_3)\in\{\pm1\}^3$,
we define
$$
N(\varepsilon_1,\varepsilon_2,\varepsilon_3)
=
\#\{
n\in\mathbb F_p:
\chi(n-1)=\varepsilon_1,
\chi(n)=\varepsilon_2,
\chi(n+1)=\varepsilon_3
\}.
$$
Our second theorem expresses these quantities in terms of $
T_p=\sum_{x\in\mathbb F_p}\chi(x^3-x)
$ the Jacobsthal sum.
The resulting formulas depend explicitly on the residue class of $p$ modulo $8$, reflecting the influence of both $\chi(-1)$ and $\chi(2)$. This yields a complete classification of all triple patterns of consecutive quadratic residues and nonresidues.

\section{Preliminary Lemmas}

In order to prove our main results, the following lemmas will be useful.

\begin{lemma}\label{le1}
The values of $\chi$ at $-1$ and $2$ are given by:
\begin{equation}\label{eq1}
\chi(-1)=
(-1)^{\frac{p-1}{2}}=
\begin{cases}
1, &\text{if } p\equiv 1 \pmod 4,\\
-1, &\text{if } p\equiv 3 \pmod 4,
\end{cases}
\end{equation}
and
\begin{equation}\label{eq2}
\chi(2)=
(-1)^{\frac{p^2-1}{8}}=
\begin{cases}
1, &\text{if } p\equiv 1,7 \pmod 8,\\
-1, &\text{if } p\equiv 3,5 \pmod 8.
\end{cases}
\end{equation}
\end{lemma}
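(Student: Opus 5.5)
Both formulas rest on Euler's criterion: for $a\not\equiv 0 \pmod p$ we have $\chi(a)\equiv a^{\frac{p-1}{2}} \pmod p$. Both sides lie in $\{\pm 1\}$ and $p>2$, so the congruence is an equality of integers. For \eqref{eq1} I take $a=-1$, which gives $\chi(-1)=(-1)^{\frac{p-1}{2}}$ at once. The exponent $\frac{p-1}{2}$ is even precisely when $p\equiv 1\pmod 4$, which yields the stated case split.

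For \eqref{eq2} I would use Gauss's lemma in its elementary form. I multiply the numbers $2,4,\dots,p-1$, i.e.\ $2k$ for $1\le k\le \frac{p-1}{2}$, in two ways. On one hand the product is $2^{\frac{p-1}{2}}\left(\frac{p-1}{2}\right)!$. On the other hand, each $2k>\frac{p-1}{2}$ can be written as $2k\equiv -(p-2k)\pmod p$, where $p-2k$ is an odd number in $[1,\frac{p-1}{2}]$. The even numbers $2k\le \frac{p-1}{2}$ together with these odd values $p-2k$ run exactly once through $1,\dots,\frac{p-1}{2}$. Hence
\[
2^{\frac{p-1}{2}}\left(\frac{p-1}{2}\right)! \equiv (-1)^{\mu}\left(\frac{p-1}{2}\right)! \pmod p,
\]
where $\mu=\#\{k: \frac{p-1}{4}<k\le \frac{p-1}{2}\}$ is the number of sign flips. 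Cancelling the factorial, which is invertible mod $p$, and applying Euler's criterion gives $\chi(2)=(-1)^{\mu}$.

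The remaining step, which is mildly fiddly but routine, is computing $\mu=\frac{p-1}{2}-\lfloor\frac{p-1}{4}\rfloor$ for $p=8m+r$ with $r\in\{1,3,5,7\}$:
\begin{itemize}
\item[] $r=1$: $\mu=2m$;
\item[] $r=3$: $\mu=2m+1$;
\item[] $r=5$: $\mu=2m+1$;
\item[] $r=7$: $\mu=2m+2$.
\end{itemize}
So $\chi(2)=1$ exactly for $p\equiv 1,7\pmod 8$. Separately I check that $\frac{p^2-1}{8}$ has the same parity as $\mu$ in each class: it is even for $r=1,7$ and odd for $r=3,5$. This proves the closed form $(-1)^{\frac{p^2-1}{8}}$.

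Since these are classical facts (the first supplementary law and the second supplementary law of quadratic reciprocity), one could also simply cite them. The only step needing care is the bijection argument in Gauss's lemma, namely that the values $\pm$-reduced into $[1,\frac{p-1}{2}]$ are distinct; this follows because $2k\equiv \pm 2k'\pmod p$ forces $k=k'$ for $1\le k,k'\le\frac{p-1}{2}$.
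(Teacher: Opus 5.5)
Your proof is correct and complete. The paper gives no argument of its own for this lemma; it only cites the first and second supplementary laws of quadratic reciprocity from Ireland--Rosen (Chapter 5, Theorem 1).

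Your route derives both laws from Euler's criterion alone. You read off $\chi(-1)$ directly, and you obtain $\chi(2)$ by counting sign flips in the product $2\cdot 4\cdots(p-1)$, as in Gauss's lemma. Your matching keeps each even $2k\le\frac{p-1}{2}$ and sends each remaining $2k$ to the odd number $p-2k$. This is a genuine bijection onto $\{1,\dots,\frac{p-1}{2}\}$. Your values of $\mu=\frac{p-1}{2}-\lfloor\frac{p-1}{4}\rfloor$ and your parities of $\frac{p^2-1}{8}$ in each class mod $8$ are also correct.

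The paper's citation buys brevity for a classical fact. Your version makes the lemma self-contained at modest cost.

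One small wording point concerns your closing remark. The congruence $2k\equiv -2k'\pmod p$ does not "force $k=k'$"; it is impossible, because $2\le k+k'\le p-1$. That remark is redundant anyway, given the explicit bijection you already exhibited.
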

\begin{proof}
See, e.g., \cite[Chapter 5, Theorem 1]{4}.
\end{proof}

\begin{lemma}\label{le2}
If $f(x) \in \mathbb{F}_p[x]$ is a monic square-free polynomial, then 
\[
\sum_{x \in \mathbb{F}_p} \chi(f(x)) = 
\begin{cases}
0, & \text{if } \deg(f) = 1, \\
-1, & \text{if } \deg(f) = 2.
\end{cases}
\]
\end{lemma}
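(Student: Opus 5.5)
The proof splits according to $\deg(f)$.

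\emph{Degree one.} Write $f(x)=x+a$. As $x$ runs over $\mathbb F_p$, so does $x+a$, since translation is a bijection. Hence
\[
\sum_{x\in\mathbb F_p}\chi(x+a)=\sum_{y\in\mathbb F_p}\chi(y).
\]
This sum is $0$: by convention $\chi(0)=0$, and among the $p-1$ nonzero elements exactly $(p-1)/2$ are squares, because squaring is $2$-to-$1$ on $\mathbb F_p^{*}$. So the nonzero terms contribute $(p-1)/2$ copies of $+1$ and $(p-1)/2$ copies of $-1$.

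\emph{Degree two.} Write $f(x)=x^2+bx+c$. Since $p$ is odd, we can complete the square: $f(x)=(x+b/2)^2-d$ with $d=b^2/4-c$. Square-freeness of $f$ means precisely that $d\neq 0$. The substitution $y=x+b/2$ then reduces the claim to
\[
S:=\sum_{y\in\mathbb F_p}\chi(y^2-d)=-1 \qquad (d\neq 0).
\]

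The main step is evaluating $S$. I would count solutions of $y^2-z^2=d$ in two ways. For each $u\in\mathbb F_p$, the number of $z$ with $z^2=u$ equals $1+\chi(u)$; this holds also at $u=0$, since $\chi(0)=0$. Therefore
\[
M:=\#\{(y,z)\in\mathbb F_p^2: y^2-z^2=d\}=\sum_{y}\bigl(1+\chi(y^2-d)\bigr)=p+S.
\]
On the other hand, the change of variables $s=y-z$, $t=y+z$ is invertible because $2$ is a unit. It turns the equation into $st=d$. Since $d\neq0$, this has exactly $p-1$ solutions, one for each $s\in\mathbb F_p^{*}$. Hence $M=p-1$, and so $S=-1$.

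An alternative route would use multiplicativity: $\chi(y^2-d)=\chi(y^2)\chi(1-d\,y^{-2})$ for $y\neq0$, followed by reindexing. This requires more care with the term $y=0$ and with $2$-to-$1$ counting, so I prefer the conic-counting argument. The only subtle points are:
\begin{itemize}
\item checking that square-free corresponds exactly to $d\neq 0$;
\item checking that the identity $\#\{z: z^2=u\}=1+\chi(u)$ holds also at $u=0$.
\end{itemize}
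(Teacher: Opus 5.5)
Your proof is correct and complete. The paper does not prove this lemma; it only cites Nica's monograph (Theorems 1.10 and 2.4). So your proposal is a self-contained substitute for that citation rather than a variant of an argument in the paper.

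The degree-one case is the standard fact $\sum_{y\in\mathbb F_p}\chi(y)=0$, obtained by translation. For the quadratic case, you complete the square to reach $y^2-d$ with $d\neq 0$. You then count points on $y^2-z^2=d$ in two ways, using the identity $\#\{z: z^2=u\}=1+\chi(u)$ and the invertible change of variables $(s,t)=(y-z,\,y+z)$. This is a clean, fully elementary evaluation that uses only that $2$ is a unit in $\mathbb F_p$. The two points you flag are both handled correctly. The case $d=0$ gives $f=(x+b/2)^2$, and conversely $f=(x-r)^2$ forces $d=0$. The counting identity holds at $u=0$ because $\chi(0)=0$.

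The citation buys brevity. Your version buys independence from outside sources: the main theorems would then rest on nothing beyond the definition of $\chi$. Your argument also adapts at once to non-monic quadratics: factor out the leading coefficient $a$ to get $-\chi(a)$. It also covers the degenerate case $d=0$, where the sum is $\sum_y\chi(y^2)=p-1$, which shows exactly why the square-free hypothesis is needed.
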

\begin{proof}
See, e.g., \cite[Theorems 1.10, 2.4]{6}.
\end{proof}

\begin{lemma}\label{le3}
Let $f(x)\in\mathbb F_p[x]$ be an odd polynomial. If $p\equiv 3\pmod 4$, then
\[
\sum_{x\in\mathbb F_p}\chi(f(x))=0.
\]
\end{lemma}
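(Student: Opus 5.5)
The plan is to use the substitution $x\mapsto -x$, which is a bijection of $\mathbb F_p$ onto itself. Put $S=\sum_{x\in\mathbb F_p}\chi(f(x))$. Reindexing the sum by $x\mapsto -x$ does not change its value, so
\[
S=\sum_{x\in\mathbb F_p}\chi(f(-x)).
\]

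Next use the hypothesis that $f$ is odd, i.e.\ $f(-x)=-f(x)$ as polynomials, and hence for every $x\in\mathbb F_p$. Since $\chi$ is completely multiplicative on $\mathbb F_p$ (including the value $\chi(0)=0$), we get $\chi(f(-x))=\chi(-1)\chi(f(x))$ for each $x$. Summing over $x$ gives
\[
S=\chi(-1)\,S.
\]

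Finally, invoke Lemma~\ref{le1}: since $p\equiv 3\pmod 4$, we have $\chi(-1)=-1$. The identity then reads $S=-S$, so $2S=0$ in $\mathbb Z$. Because $S$ is an integer, this forces $S=0$.

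There is no real obstacle here. The only points needing care are that multiplicativity of $\chi$ must also cover the zeros of $f$, where both sides vanish, and that the final cancellation takes place in $\mathbb Z$ rather than in $\mathbb F_p$, so dividing by $2$ is harmless.
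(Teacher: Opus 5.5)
Your proof is correct and is the same argument the paper gives: reindex by $x\mapsto -x$, then use the oddness of $f$ and the multiplicativity of $\chi$ to get $S=\chi(-1)S=-S$. Your remark that $2S=0$ is an identity in $\mathbb Z$ is actually the right justification for the last step; the paper's appeal to the characteristic of $\mathbb F_p$ not being $2$ is beside the point, since $S$ is an integer and not an element of $\mathbb F_p$.
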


\begin{proof}
Since $p\equiv 3\pmod 4$, we have $\chi(-1)=-1$. Let
\[
S=\sum_{x\in\mathbb F_p}\chi(f(x)).
\]
Because $x\mapsto -x$ is a permutation of $\mathbb F_p$ and $f$ is odd,
\[
S=\sum_{x\in\mathbb F_p}\chi(f(-x))
=
\sum_{x\in\mathbb F_p}\chi(-1)\chi(f(x))
=
-\sum_{x\in\mathbb F_p}\chi(f(x))=-S.
\]
Hence, $2S=0$. Since $p$ is odd, the characteristic of $\mathbb F_p$ is not $2$, and it follows that $S=0$.
\end{proof}

\section{Main Results}

Our first result gives an explicit evaluation of the number of
consecutive pairs $(n,n+1)$ whose quadratic character values are
prescribed.
\begin{theorem}\label{th1}
For $(\varepsilon_1,\varepsilon_2)\in\{\pm1\}^2$ define
\[
N(\varepsilon_1,\varepsilon_2)
=
\#\{n\in\mathbb F_p:
\chi(n)=\varepsilon_1,
\chi(n+1)=\varepsilon_2\}.
\]
Then
\begin{equation}\label{eq3}
N(\varepsilon_1,\varepsilon_2)
=
\frac14
\begin{cases}
p-2-(\varepsilon_2
+\varepsilon_1
+\varepsilon_1\varepsilon_2),&  \text{ if }p\equiv1\pmod4,\\[2mm]
p-2-(\varepsilon_2-\varepsilon_1
+\varepsilon_1\varepsilon_2),&  \text{ if }p\equiv3\pmod4.
\end{cases}
\end{equation}
In particular, the distribution of pairs is symmetric,
\[
N(\varepsilon_1,\varepsilon_2)=N(\varepsilon_2,\varepsilon_1),
\]
if and only if \(p\equiv1\pmod4\).
\end{theorem}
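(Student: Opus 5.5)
The plan is to use the indicator identity: for $n\notin\{0,-1\}$, the quantity $\frac14(1+\varepsilon_1\chi(n))(1+\varepsilon_2\chi(n+1))$ equals $1$ exactly when $\chi(n)=\varepsilon_1$ and $\chi(n+1)=\varepsilon_2$, and equals $0$ otherwise. The values $n=0$ and $n=-1$ never contribute to $N(\varepsilon_1,\varepsilon_2)$, because $\chi(0)=0\notin\{\pm1\}$. So we write
\[
N(\varepsilon_1,\varepsilon_2)=\frac14\sum_{n\in\mathbb F_p\setminus\{0,-1\}}\bigl(1+\varepsilon_1\chi(n)\bigr)\bigl(1+\varepsilon_2\chi(n+1)\bigr).
\]
Expanding gives four sums over $n\ne 0,-1$: the constant sum, which is $p-2$; the sum $\varepsilon_1\sum\chi(n)$; the sum $\varepsilon_2\sum\chi(n+1)$; and the sum $\varepsilon_1\varepsilon_2\sum\chi(n(n+1))$.

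Each piece is then evaluated by completing it to a sum over all of $\mathbb F_p$ and applying Lemma \ref{le2}.

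\textbf{Linear sums.} Since $\sum_{n\in\mathbb F_p}\chi(n)=0$, removing $n=0$ (value $0$) and $n=-1$ (value $\chi(-1)$) gives $\sum_{n\ne0,-1}\chi(n)=-\chi(-1)$. Similarly, $\sum_{n\ne0,-1}\chi(n+1)=-\chi(1)=-1$, since the excluded term $n=0$ contributes $\chi(1)=1$ and $n=-1$ contributes $\chi(0)=0$.

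\textbf{Quadratic sum.} The polynomial $x^2+x$ is monic and square-free, so Lemma \ref{le2} gives $\sum_{n\in\mathbb F_p}\chi(n(n+1))=-1$. Both excluded points are zeros of $n(n+1)$, so the restricted sum is also $-1$.

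Collecting these,
\[
N(\varepsilon_1,\varepsilon_2)=\frac14\bigl(p-2-\varepsilon_1\chi(-1)-\varepsilon_2-\varepsilon_1\varepsilon_2\bigr).
\]
Substituting $\chi(-1)=\pm1$ from Lemma \ref{le1} yields the two cases of \eqref{eq3}.

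\textbf{Symmetry.} From the formula, $N(\varepsilon_1,\varepsilon_2)-N(\varepsilon_2,\varepsilon_1)=\frac14(1-\chi(-1))(\varepsilon_2-\varepsilon_1)$.
\begin{itemize}
\item If $p\equiv1\pmod4$, then $\chi(-1)=1$ and the difference vanishes for all pairs, so the distribution is symmetric.
\item If $p\equiv3\pmod4$, then $\chi(-1)=-1$, and the pair $(\varepsilon_1,\varepsilon_2)=(1,-1)$ gives difference $-1\neq0$, so symmetry fails.
\end{itemize}

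There is no real obstacle in this argument. The only points needing care are the boundary terms $n=0,-1$: we must justify dropping them from the count, and correct for them when completing the character sums to all of $\mathbb F_p$.
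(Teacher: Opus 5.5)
Your proof is correct and is essentially the paper's argument: the indicator product over $\mathbb F_p\setminus\{0,-1\}$, completing the linear and quadratic character sums via Lemma~\ref{le2}, then substituting $\chi(-1)$. You also supply the symmetry argument, which the paper only asserts, but with a harmless sign slip: in fact $N(\varepsilon_1,\varepsilon_2)-N(\varepsilon_2,\varepsilon_1)=\frac14(1-\chi(-1))(\varepsilon_1-\varepsilon_2)$, so the pair $(1,-1)$ gives $+1$ rather than $-1$ (for $p=3$, $N(1,-1)=1$ and $N(-1,1)=0$), and the difference is still nonzero, so your conclusion stands.
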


\begin{proof}
Since $\varepsilon_1,\varepsilon_2\in\{\pm1\}$ and
$\chi(n)\in\{\pm1\}$ whenever $n\neq0$, the quantity
\[
\frac{1+\varepsilon\chi(n)}{2}
\]
is the indicator function of the condition $\chi(n)=\varepsilon$.
Indeed,
\[
\frac{1+\varepsilon\chi(n)}{2}
=
\begin{cases}
1,& \chi(n)=\varepsilon,\\[2mm]
0,& \chi(n)=-\varepsilon.
\end{cases}
\]
On the other hand, by definition, the target count $N(\varepsilon_1, \varepsilon_2)$ isolates elements where neither $n \equiv 0$ nor $n+1 \equiv 0 \pmod p$. Since $\chi(n), \chi(n+1) \in \{\pm 1\}$ whenever $n \notin \{0, -1\}$, the product function acts as a clean binary indicator. Therefore, we can write
\[
N(\varepsilon_1,\varepsilon_2)
= \frac14 \sum_{n \in \mathbb{F}_p \setminus \{0,-1\}} (1+\varepsilon_1\chi(n))(1+\varepsilon_2\chi(n+1)).
\]
Expanding the summand, we find
\begin{align*}
4N(\varepsilon_1,\varepsilon_2)
&=
\sum_{n\in\mathbb F_p\setminus\{0,-1\}}1
+\varepsilon_1
\sum_{n\in\mathbb F_p\setminus\{0,-1\}}\chi(n) \\
&\quad
+\varepsilon_2
\sum_{n\in\mathbb F_p\setminus\{0,-1\}}\chi(n+1)
+\varepsilon_1\varepsilon_2
\sum_{n\in\mathbb F_p\setminus\{0,-1\}}
\chi(n(n+1)).
\end{align*}
We now evaluate the sums. It follows from Lemma \ref{le2} that
\[
\sum_{n\in\mathbb F_p}\chi(n)=\sum_{n\in\mathbb F_p}\chi(n+1)=0,
\]
from which
\[
\sum_{n\in\mathbb F_p\setminus\{0,-1\}}\chi(n)
=
-\chi(-1),
\]
and similarly
\[
\sum_{n\in\mathbb F_p\setminus\{0,-1\}}\chi(n+1)
=
-\chi(1)
=-1.
\]
Moreover, by Lemma \ref{le2},
\[
\sum_{n\in\mathbb F_p\setminus\{0,-1\}}
\chi(n(n+1))
=
\sum_{n\in\mathbb F_p}
\chi(n(n+1))
=-1,
\]
since the terms corresponding to $n=0$ and $n=-1$ vanish.
Finally,
\[
\sum_{n\in\mathbb F_p\setminus\{0,-1\}}1=p-2.
\]
Substituting these evaluations gives
\[
N(\varepsilon_1,\varepsilon_2)
=
\frac14
\Bigl(
p-2
-\varepsilon_1\chi(-1)
-\varepsilon_2
-\varepsilon_1\varepsilon_2
\Bigr).
\]
Now using Identity \eqref{eq1}, we obtain the two cases in \eqref{eq3}.
This completes the proof.
\end{proof}

Now we generalize the counting problem to three
consecutive elements $(n-1,n,n+1)$ and obtain formulas that depend on
the residue class of $p$ modulo $8$ and on the Jacobsthal sum
$
T_p=\sum_{n\in\mathbb F_p}\chi(n^3-n)$.
\begin{theorem}\label{th2}
For
$(\varepsilon_1,\varepsilon_2,\varepsilon_3)\in\{\pm1\}^3$, define
\[
N(\varepsilon_1,\varepsilon_2,\varepsilon_3)
=
\#\Bigl\{
n\in\mathbb F_p:
\chi(n-1)=\varepsilon_1,\;
\chi(n)=\varepsilon_2,\;
\chi(n+1)=\varepsilon_3
\Bigr\}.
\]
\begin{enumerate}

\item If $p\equiv1\pmod8$, then
\begin{equation}\label{eq4}
N(\varepsilon_1,\varepsilon_2,\varepsilon_3)
=
\frac18
\Bigl(
p-3
-2\varepsilon_1
-2\varepsilon_2
-2\varepsilon_3
-2\varepsilon_1\varepsilon_2
-2\varepsilon_1\varepsilon_3
-2\varepsilon_2\varepsilon_3
+\varepsilon_1\varepsilon_2\varepsilon_3T_p
\Bigr).
\end{equation}

\item If $p\equiv3\pmod8$, then
\begin{equation}\label{eq5}
N(\varepsilon_1,\varepsilon_2,\varepsilon_3)
=
\frac18
\Bigl(
p-3
+\varepsilon_1\varepsilon_2\varepsilon_3T_p
\Bigr).
\end{equation}

\item If $p\equiv5\pmod8$, then
\begin{equation}\label{eq6}
N(\varepsilon_1,\varepsilon_2,\varepsilon_3)
=
\frac18
\Bigl(
p-3
-2\varepsilon_2
-2\varepsilon_1\varepsilon_3
+\varepsilon_1\varepsilon_2\varepsilon_3T_p
\Bigr).
\end{equation}

\item If $p\equiv7\pmod8$, then
\begin{equation}\label{eq7}
N(\varepsilon_1,\varepsilon_2,\varepsilon_3)
=
\frac18
\Bigl(
p-3
+2\varepsilon_1
-2\varepsilon_3
-2\varepsilon_1\varepsilon_2
-2\varepsilon_2\varepsilon_3
+\varepsilon_1\varepsilon_2\varepsilon_3T_p
\Bigr).
\end{equation}
\end{enumerate}
\end{theorem}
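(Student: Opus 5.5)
The plan is to adapt the proof of Theorem~\ref{th1} to three factors. Let $S=\mathbb F_p\setminus\{-1,0,1\}$. These are exactly the $n$ for which all three values $\chi(n-1),\chi(n),\chi(n+1)$ lie in $\{\pm1\}$, and any $n\notin S$ is automatically excluded from $N(\varepsilon_1,\varepsilon_2,\varepsilon_3)$. On $S$ the product of the three indicators $\frac{1+\varepsilon\chi(\cdot)}{2}$ is an exact indicator, so
\[
8N(\varepsilon_1,\varepsilon_2,\varepsilon_3)=\sum_{n\in S}\bigl(1+\varepsilon_1\chi(n-1)\bigr)\bigl(1+\varepsilon_2\chi(n)\bigr)\bigl(1+\varepsilon_3\chi(n+1)\bigr).
\]
Expanding gives eight sums: the constant sum, three linear sums, three pair sums and one triple sum.

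For each sum I would evaluate the full sum over $\mathbb F_p$ using Lemma~\ref{le2}, then subtract the terms at the removed points $n\in\{-1,0,1\}$ that are nonzero.
\begin{itemize}
\item The constant sum is $p-3$.
\item The linear sums have full value $0$ by Lemma~\ref{le2}. After removing the nonzero boundary terms they become:
\[
\sum_{S}\chi(n-1)=-\chi(-2)-\chi(-1),\qquad
\sum_{S}\chi(n)=-\chi(-1)-1,\qquad
\sum_{S}\chi(n+1)=-1-\chi(2).
\]
\item The pair sums involve the monic square-free quadratics $(n-1)n$, $n(n+1)$ and $n^2-1$, so each full sum is $-1$ by Lemma~\ref{le2}. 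Exactly one removed point gives a nonzero term in each case:
\[
\sum_S\chi((n-1)n)=-1-\chi(2)\ (\text{from } n=-1),\qquad
\sum_S\chi(n(n+1))=-1-\chi(2)\ (\text{from } n=1),
\]
\[
\sum_S\chi(n^2-1)=-1-\chi(-1)\ (\text{from } n=0).
\]
\item The triple sum is $\sum_S\chi(n^3-n)=T_p$, since every removed point is a root of $n^3-n$.
\end{itemize}

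Substituting these values gives the single uniform identity
\begin{align*}
8N={}&p-3-\varepsilon_1\bigl(\chi(-2)+\chi(-1)\bigr)-\varepsilon_2\bigl(1+\chi(-1)\bigr)-\varepsilon_3\bigl(1+\chi(2)\bigr)\\
&-\varepsilon_1\varepsilon_2\bigl(1+\chi(2)\bigr)-\varepsilon_2\varepsilon_3\bigl(1+\chi(2)\bigr)-\varepsilon_1\varepsilon_3\bigl(1+\chi(-1)\bigr)+\varepsilon_1\varepsilon_2\varepsilon_3T_p .
\end{align*}
The four cases then follow by inserting the values of $\chi(-1)$ and $\chi(2)$, with $\chi(-2)=\chi(-1)\chi(2)$, from Lemma~\ref{le1}.
\begin{itemize}
\item For $p\equiv1\pmod 8$ every coefficient is $-2$, which gives \eqref{eq4}.
\item For $p\equiv3\pmod 8$ we have $\chi(-1)=\chi(2)=-1$ and $\chi(-2)=1$. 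The $\varepsilon_1$ coefficient is $-(1-1)=0$ and all other non-constant coefficients except the triple one vanish, which gives \eqref{eq5}.
\item For $p\equiv5\pmod8$ we have $\chi(-1)=1$, $\chi(2)=-1$ and $\chi(-2)=-1$. Only the $\varepsilon_2$ and $\varepsilon_1\varepsilon_3$ terms survive, each with coefficient $-2$, which gives \eqref{eq6}.
\item For $p\equiv7\pmod8$ we have $\chi(-1)=-1$, $\chi(2)=1$ and $\chi(-2)=-1$. This gives $+2\varepsilon_1$, $-2\varepsilon_3$, $-2\varepsilon_1\varepsilon_2$ and $-2\varepsilon_2\varepsilon_3$, which is \eqref{eq7}.
\end{itemize}

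The main obstacle is purely bookkeeping: the boundary corrections from the three removed points must be tracked correctly, in particular which removed point gives a nonzero value in each shifted sum. I would double-check each one by direct substitution. The only structural input beyond Lemma~\ref{le2} is that $T_p$ needs no correction, because $n^3-n$ vanishes at all three removed points.
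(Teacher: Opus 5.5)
Your proposal is correct and follows the paper's proof essentially step for step. Both expand the product of three indicators over $\mathbb F_p\setminus\{-1,0,1\}$, apply the same boundary corrections to the eight sums (including $-\chi(-2)-\chi(-1)$ for the $\varepsilon_1$ term and the fact that $T_p$ needs no correction), and then substitute $\chi(-1)$ and $\chi(2)$ separately for each class of $p \bmod 8$.
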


\begin{proof}
As in the proof of Theorem \ref{th1}, we can write
\[
N(\varepsilon_1,\varepsilon_2,\varepsilon_3)
=
\frac18
\sum_{n\in\mathbb F_p\setminus\{-1,0,1\}}
(1+\varepsilon_1\chi(n-1))
(1+\varepsilon_2\chi(n))
(1+\varepsilon_3\chi(n+1)).
\]
Expanding the product gives
\[
8N(\varepsilon_1,\varepsilon_2,\varepsilon_3)
=
S_0
+\varepsilon_1S_1
+\varepsilon_2S_2
+\varepsilon_3S_3
+\varepsilon_1\varepsilon_2S_{12}
+\varepsilon_1\varepsilon_3S_{13}
+\varepsilon_2\varepsilon_3S_{23}
+\varepsilon_1\varepsilon_2\varepsilon_3S_{123},
\]
where
\[
S_0=\sum_{n\in\mathbb F_p\setminus\{-1,0,1\}}1,\quad S_1=\sum_{n\in\mathbb F_p\setminus\{-1,0,1\}}\chi(n-1),
\]
\[
S_2=\sum_{n\in\mathbb F_p\setminus\{-1,0,1\}}\chi(n),\quad
S_3=\sum_{n\in\mathbb F_p\setminus\{-1,0,1\}}\chi(n+1),
\]
\[
\ S_{12}=\sum_{n\in\mathbb F_p\setminus\{-1,0,1\}}\chi(n^2-n),\quad S_{13}=\sum_{n\in\mathbb F_p\setminus\{-1,0,1\}}\chi(n^2-1),
\]
\[
S_{23}=\sum_{n\in\mathbb F_p\setminus\{-1,0,1\}}\chi(n^2+n),\quad S_{123}=\sum_{n\in\mathbb F_p\setminus\{-1,0,1\}}\chi(n^3-n).
\]
Clearly we have $S_0=p-3$. Since $\sum_{n\in\mathbb F_p}\chi(n)=0$, we obtain
\[
S_1
=
-\chi(-2)-\chi(-1)
=
-\chi(-1)(1+\chi(2)),
\]
\[
S_2
=
-\chi(-1)-1,
\]
and
\[
S_3
=
-\chi(2)-1.
\]
Next, Lemma \ref{le2} gives
\[
\sum_{n\in\mathbb F_p}\chi(n^2-n)=-1.
\]
Removing the contributions of $n=-1$, $0$, and $1$, we find
\[
S_{12}
=
-1-\chi(2).
\]
Similarly,
\[
S_{23}
=
-1-\chi(2).
\]
Also, by Lemma \ref{le2},
\[
\sum_{n\in\mathbb F_p}\chi(n^2-1)=-1.
\]
The contribution of the omitted values is
\[
0+\chi(-1)+0,
\]
from which
\[
S_{13}
=
-1-\chi(-1).
\]
Finally, since $n=-1,0,1$ are roots of $n^3-n$, their contributions are zero, and therefore
\[
S_{123}
=
\sum_{n\in\mathbb F_p}\chi(n^3-n)
=
T_p.
\]
Substituting the above evaluations gives
\[
N(\varepsilon_1,\varepsilon_2,\varepsilon_3)
=
\frac18
\Bigl(
p-3
-\varepsilon_1\chi(-1)(1+\chi(2))
-\varepsilon_2(1+\chi(-1))
-\varepsilon_3(1+\chi(2))
\]
\[
\qquad
-\varepsilon_1\varepsilon_2(1+\chi(2))
-\varepsilon_2\varepsilon_3(1+\chi(2))
-\varepsilon_1\varepsilon_3(1+\chi(-1))
+\varepsilon_1\varepsilon_2\varepsilon_3T_p
\Bigr).
\]
Formulas \eqref{eq4}--\eqref{eq7} now follow by inserting the values
of $\chi(-1)$ and $\chi(2)$ given by
\eqref{eq1} and \eqref{eq2}, according to the residue class of $p$
modulo $8$.
This completes the proof.
\end{proof}

\begin{corollary}
If $p\equiv3\pmod8$, then for every
$(\varepsilon_1,\varepsilon_2,\varepsilon_3)\in\{\pm1\}^3$,
\[
N(\varepsilon_1,\varepsilon_2,\varepsilon_3)
=
\frac{p-3}{8}.
\]
\end{corollary}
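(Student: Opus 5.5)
The plan is to specialize Theorem \ref{th2} to the case $p\equiv3\pmod8$ and then show that the Jacobsthal sum vanishes there. By formula \eqref{eq5}, for $p\equiv3\pmod8$ and every $(\varepsilon_1,\varepsilon_2,\varepsilon_3)\in\{\pm1\}^3$ we have
\[
N(\varepsilon_1,\varepsilon_2,\varepsilon_3)=\frac18\Bigl(p-3+\varepsilon_1\varepsilon_2\varepsilon_3T_p\Bigr).
\]
All the linear and quadratic terms have already disappeared, because $\chi(-1)=-1$ and $\chi(2)=-1$ by Lemma \ref{le1}, which makes every factor $1+\chi(-1)$ and $1+\chi(2)$ equal to zero. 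So the corollary reduces to proving $T_p=0$.

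To get $T_p=0$, I will apply Lemma \ref{le3} to $f(x)=x^3-x$. This polynomial is odd, since $f(-x)=-x^3+x=-f(x)$. Moreover $p\equiv3\pmod8$ implies $p\equiv3\pmod4$. Lemma \ref{le3} then gives
\[
T_p=\sum_{x\in\mathbb F_p}\chi(x^3-x)=0.
\]
Substituting this into the displayed formula yields $N(\varepsilon_1,\varepsilon_2,\varepsilon_3)=(p-3)/8$ for every sign pattern, as claimed.

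There is no real obstacle here. The only point needing care is to check that the hypothesis of Lemma \ref{le3} concerns $p$ modulo $4$, so that $p\equiv3\pmod8$ qualifies. As a sanity check, $(p-3)/8$ is indeed an integer when $p\equiv3\pmod8$. The argument also shows that $T_p=0$ when $p\equiv7\pmod8$, although the extra linear terms in \eqref{eq7} prevent uniformity in that case.
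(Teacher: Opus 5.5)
Your proof is correct and is essentially the paper's own argument. You apply Lemma~\ref{le3} to the odd polynomial $f(x)=x^3-x$, using that $p\equiv3\pmod8$ implies $p\equiv3\pmod4$, to get $T_p=0$, and then substitute this into \eqref{eq5}.
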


\begin{proof}
Since \(p\equiv3\pmod8\) and $f(x)=x^3-x$ is odd, Lemma \ref{le3} gives \(T_p=0\). Hence, \eqref{eq5} reduces to
\[
N(\varepsilon_1,\varepsilon_2,\varepsilon_3)
=
\frac{p-3}{8},
\]
which is independent of the prescribed sign pattern
$(\varepsilon_1,\varepsilon_2,\varepsilon_3)$.
\end{proof}


\begin{thebibliography}{99}
\bibitem{1} D. A. Burgess,
The distribution of quadratic residues and non-residues,
{\it Mathematika} {\bf 4} (1957), 106--112.

\bibitem{2}
L. Carlitz,
Sets of primitive roots,
{\it Compositio Mathematica} {\bf 13} (1956), 65--70.

\bibitem{3}
P. Hummel,
On consecutive quadratic non-residues: A conjecture of Issai Schur,
{\it J. Number Theory} {\bf 103} (2003), 257--266.

\bibitem{4}
K. Ireland and M. Rosen,
{\it A Classical Introduction to Modern Number Theory},
Springer, New York, 1990.

\bibitem{5}
Y. K. Lau and J. Wu,
On the least quadratic non-residues,
{\it International Journal of Number Theory}
{\bf 4} (2008), 423--435.

\bibitem{6}
B.~Nica, {\it Jacobsthal Sums}, Monographs in Number Theory, vol.~14, World Scientific Publishing Co., Singapore, 2025.

\bibitem{7}
A. Schinzel,
Primitive roots and quadratic non-residues,
{\it Acta Arithmetica}
{\bf 149} (2011), 161--170.

\bibitem{8}
T. T. Wang and X. X. Lv,
The quadratic residues and some of their new distribution properties,
{\it Symmetry}
{\bf 12} (2020), 421--428.

\bibitem{9}
X. Wang and A. H. Li,
Distribution properties and applications of consecutive quadratic residues,
{\it Acta Mathematica Sinica (Chinese Series)}
{\bf 66} (2023), 1--10.


\end{thebibliography}
\end{document}